\newtheorem{prop}{Proposition}
\newtheorem{thm}{Theorem}
\theoremstyle{remark}
\newtheorem{rmk}{Remark}
\providecommand{\reals}{\mathbb{R}}
\providecommand{\eps}{\varepsilon}
\providecommand{\expec}{\mathbb{E}}
\newcommand{\tr}{\operatorname{tr}}
\providecommand{\oh}{\mathrm{o}}
\providecommand{\Oh}{\mathrm{O}}
\providecommand{\normal}{\mathcal{N}}
\providecommand{\KL}{\operatorname{KL}}
\providecommand{\1}{\mathds{1}}
\newcommand{\argmin}{\operatornamewithlimits{\arg\min}}
\title[Self-ROT and GMM MLE]{ Regularised Optimal self-Transport is approximate   
  Gaussian Mixture Maximum Likelihood}
\author{G. Mordant}
\thanks{The author gratefully acknowledges the support of the DFG through the CRC 1456.}
\address{
\parbox{\linewidth}{IMS,Universität Göttingen\\
Goldschmidtstra\ss e, 7  \\
37073, Göttingen, Germany }
 }
\email{gilles.mordant@uni-goettingen.de}
\date{\today}                                           
\begin{document}
\begin{abstract}
We investigate the link between regularised self-transport problems and maximum likelihood estimation in Gaussian mixture models (GMM). 
This link suggests that self-transport followed by a clustering technique leads to principled estimators at a reasonable computational cost. 
Also, robustness, sparsity and stability properties of the optimal transport plan arguably make the regularised self-transport a statistical tool of choice for the GMM.
\end{abstract}
\maketitle

\section{Introduction}

Clustering is a standard problem in statistics and data science. Given a set of observations, the aim is to separate these observations in $K$ groups that are similar. 
A typical model for which clustering is required is that of Gaussian mixtures; its definition is recalled in Section~\ref{sec: GausMix}. 
Even though this model has a long history, answering the natural questions pertaining to the estimation of its parameters are still an active research endeavour, see \citet{wu2021randomly,doss2023optimal,loffler2021optimality} and the references therein for a more complete overview.

In a recent line of work \citet{marshall2019manifold}, \citet{landa2021doubly}, \citet{landa2023robust} investigated the use of entropy-regularised self-transport to perform manifold learning;
see Section~\ref{sec: Reg} for an introduction to regularised optimal transport. 
In a subsequent work, \citet{zhang2023manifold} considered the impact of using quadratically regularised optimal transport for the same purposes, stressing the importance of the sparsity of the obtained transport plan for local adaptivity.  In that last work, the authors showed the usefulness of applying spectral clustering on the optimal transport plan to recover the clusters of Gaussian Mixtures Models. The usefulness of bistochastic projection was also independently observed by \citet{lim2020doubly}. In that work, however, they do not consider the projection onto hollow bistochastic matrices, an important feature as it turns out. Also, the statistical point of view is not present in that latter reference.

In this note, we exhibit the reason why such a method performs well in practice by relating it to maximum likelihood estimation of GMM. This is done in Section~\ref{sec: MainCont}.
Before doing so, we first provide preliminary background.

\subsection{Gaussian mixtures}
\label{sec: GausMix}
Let us denote by $\Delta_K$ the $K$-simplex\footnote{The $K$-simplex is defined as  $\Delta_k:=\big\{x \in \reals_+^K : \sum_{k=1}^K x_k =1\big\}$.}.
We recall that $d$-dimensional Gaussian mixtures model with $K$ components can be thought of as the result of two-stage process. First, one selects the cluster or subgroup at random, which is modelled as sampling from the multinomial $\operatorname{Mult}(1; \theta_1,\ldots, \theta_K)$, where $\theta \in \Delta_K$ is the vector encoding the group proportions. 
Then, conditionally on the selected cluster, one draws from the Gaussian random variable with corresponding parameters. We denote the  mean vectors by $\mu_k $ and the covariance matrices by $\Sigma_k$, $k\in \{1, \ldots K\}$. For now, let us further set $\Sigma_k= \sigma^2 I_d$, for each $k$ and assume that $\sigma^2$ is fixed and known. The cluster membership of the observation $i$ is encoded in the  random vector $Z_i$ of size $K$ containing only zeros except at the position of the cluster where it is one.

The observations thus are pairs $(Z_i, Y_i), 1 \le i \le n$ and maximising 
the log-likelihood is equivalent to maximising the function 
\begin{equation}
\label{eq: LogLik}
\ell_n(Z, \mu):= \sum_{i=1}^n \sum_{k=1}^K Z_{ik}\left( \ln \theta_k -  \frac1{2\sigma^2}\| Y_i - \mu_k \|^2\right) - n d\log \sigma.
\end{equation}
Solving this maximisation problem is challenging, in particular because the $Z_i$'s are not observed. We now recall some basics of discrete, regularised optimal transport.

\subsection{Regularised optimal transport}
\label{sec: Reg}
In (discrete) optimal transport, one is given two sets of points $\mathbb{X}:=\{x_1, \ldots, x_n\}$ and $\mathbb{Y}:=\{y_1, \ldots, y_n\}$. These sets are endowed with a (probability) measure, which we'll take to be uniform\footnote{The problem naturally generalises to non-uniform measures and $\mathbb{X},\mathbb{Y}$ with different cardinalities. We do not require this here, though.}. Given a cost function $c : \mathbb{X} \times \mathbb{Y}\to \reals$, there is a corresponding $n\times n$ cost matrix $C$. The regularised optimal transport problem then reads, 
\[
	\min_{\pi \in B_{n}} \langle \pi , C \rangle + \eps \Phi (\pi),
\]
where $B_{n}$ is the set of bistochastic matrices, $\eps$ is a regularisation parameter and $\Phi$ is a convex function. Usually, a $f$-divergence with respect to the uniform measure on $\mathbb{X}\times \mathbb{Y}$ is chosen. 
In this note, we consider the two most classical functions $\Phi$.
\begin{itemize}
\item First, $\Phi(\pi)= \sum_{i,j} \pi_{i,j} \log( \pi_{i,j})$, with the convention $ 0 \log 0=0$. The problem in \eqref{Eq: RegOT} for that particular choice is then called the entropy-regularised optimal transport.
\item Second, the choice $\Phi(\pi)= \sum_{i,j} \pi_{i,j}^2/2$, gives birth to the quadratically regularised optimal transport problem.
\end{itemize}

A classical choice for the cost function, provided that the points belong to $\reals^d$, is the squared Euclidean distance. A reason for that is that the theory of optimal transport with squared distance is particularly elegant (in the continuous case); we refer to \citet{villani2008optimal} for a complete account of that theory.

Importantly, the problem admits a dual form, which is often more convenient for numerics. The dual formulation reads, 
\[
\sup_{u,v \in \reals^n } u^\top 1_n + v^\top 1_n - \eps \Phi^*\big( u 1_n^\top + v 1_n^\top  - C\big),
\]
where $\Phi^*$ is the Fenchel--Legendre transform of $\Phi$ and $1_n$ is a $n$-dimensional vectors of ones. Note that $\Phi$ is a function of matrices with positive entries, implying that the supremum in the definition of the Fenchel--Legendre transform is restricted. One thus gets in the quadratically regularised case
\[
 \Phi^*( u 1_n^\top + v 1_n^\top  - C) =\frac12 \left(  \big[ u 1_n^\top + 1_n  v^\top  - C]_+ \right)^2,
\]
where $[x]_+$ is the positive part of $x$.
In both cases, the solution of the primal and the dual problem, $\pi^*$ and $u^*, v^*$ are related by the equalities
\[
\pi_{\KL}^* = \exp\left( \frac1\eps \big( f^* 1_n^\top +  1_n (g^*)^\top  - C\big)\right)\quad  \text{or}\quad  \pi_{Q}^*=\frac1\eps \left(  \big[  u^* 1_n^\top +  1_n (v^*)^\top  - C]_+ \right), 
\]
where $(f^*,g^*)$ and  $(u^*, v^*)$ are the pairs of optimal dual potentials of the respective problems. One sees from that formulation that each entry of the optimal plan for the entropy-regularised case will be positive, while the one for the quadratic regularisation can (and usually does) have entries equal to zero.

Finally, we'd like to stress an important property of entropy or quadratically regularised optimal transport, when $B_n$ is replaced by the set of bistochastic matrices with zero diagonal, call the latter set $B_n^\circ$.
In that case, the problem exhibits robustness to \emph{heteroskedastic} noise in high dimensions. 
For more clarity, consider a set of points $x_i$'s in high dimension that  is corrupted with noise, i.e., one observes 
\[
\tilde x_i = x_i + \eta_i,
\] 
where $\expec \eta_i = 0$ and $\expec \eta_i \eta_i ^\top = \Sigma_i$, where the $\Sigma_i's$ can be different for all indices. 
In high dimensional settings, for $i\neq j$ and well-behaved\footnote{See \citet{landa2021doubly} or \citet{zhang2023manifold}  for a precise statement of the conditions. In essence, they mean that the noise should be spherical enough and not concentrate in certain directions.} $\Sigma_i$'s, it holds that 
\[
\| \tilde x_i - \tilde x_j \|^2 \approx \expec \| \eta_i\|^2 + \expec \| \eta_j\|^2  + \|  x_i -  x_j \|^2.
\]
Therefore, if one enforces the transport plan to be zero on the diagonal, the additional terms arising from the heteroskedastic noise are a rank one perturbation, which is absorbed by the optimal potentials. We will state a similar result adapted to the object of this paper in Section~\ref{sec: Main}, which is why we only sketch the previous result.
Therefore, the self-transport problem that we will be considering is 
\begin{equation}
\label{Eq: RegOT}
\pi^*_{\eps,\Phi}=\argmin_{\pi \in B_{n}^\circ} \langle \pi , C \rangle + \eps \Phi (\pi),
\end{equation}

Before moving to the next section, we stress the fact that quadratically optimised transport is equivalent to a metric projection problem for $\eps>0$, i.e., 
\[
\pi_{\eps,Q}^*(C):= \argmin_{\pi \in B_n^\circ } \langle C, \pi \rangle + \frac{\eps}{2} \|\pi\|_F^2 =  \argmin_{\pi \in B_n^\circ } \left \| \pi - \frac{-C}{\eps} \right\|_F^2. 
\]
As such, $C\mapsto\pi_{\eps,Q}^*(C)$ is thus a Lipschitz function of $C$, implying that small perturbations of $C$ won't have a dramatic impact on the solution. 

\section{Main results}
\label{sec: MainCont}
We now turn to the main contributions of this note.
In the sequel, $H(\theta)$ denotes the entropy of the non-negative vector $\theta$ and $\|A\|_2$ denotes the largest singular value of the matrix $A$.

\subsection{Isotropic Gaussain mixtures}
\label{sec: IsoGausMix}
\begin{thm}
\label{thm: main}
Consider a sample of an isotropic Gaussian mixture model $(Y_i)_{i=1}^n$ with known variance $\sigma^2$ as above. Set 
\[
C_{ij}:=\frac12\| Y_i - Y_j \|^2.
\]
and assume that $\theta$ and $K$ are known.
Then, the problem \eqref{Eq: RegOT}
is asymptotically equivalent to 
\begin{equation}
\label{eq: ApproxLogLik}
\max_{Z\in \{0,1\}^{n\times K}} \sum_{i=1}^n \sum_{k=1}^K Z_{ik} \ln \theta_k  - \sum_{i=1}^n Z_{ik} \left(\|Y_i -\mu_ k \|^2 + O_p\left(\tfrac{ \sqrt{d}}{\sqrt{n}}\right)\right)- nd(\log \sigma- \sigma^2),
\end{equation}
 for the choice 
\[
\eps = \frac{n\sigma^2}{K} H(\theta).
\]
\end{thm}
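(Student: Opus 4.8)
The plan is to reduce both \eqref{Eq: RegOT} and the maximisation in \eqref{eq: ApproxLogLik} to the \emph{same} optimisation over hard assignments $Z\in\{0,1\}^{n\times K}$, and then to show that the two reduced objectives coincide up to an additive constant and the stated $O_p(\sqrt d/\sqrt n)$ slack; I treat the entropic regulariser, where the appearance of $H(\theta)$ is most transparent, the quadratic case being analogous through its projection characterisation. First I would show that, for sufficiently separated means, the minimiser of \eqref{Eq: RegOT} is asymptotically block diagonal with blocks indexed by the clusters. Writing the optimal plan in its Gibbs form $\pi^*_{ij}\propto\exp(-C_{ij}/\eps)$ off the diagonal, the cross-cluster entries carry the surplus cost $\tfrac12\|\mu_k-\mu_{k'}\|^2$ and are negligible as soon as $\eps$ is small against the separation, whereas within a single cluster all admissible costs agree to leading order, so symmetry pins the within-block weights to the uniform value $1/(n_k-1)$ with $n_k=\sum_i Z_{ik}$. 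This identifies every such clustered plan with an assignment $Z$, with block index sets $\mathcal C_k:=\{i:Z_{ik}=1\}$.

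Next I would evaluate the two pieces of the objective on this ansatz. For the transport cost I would use the elementary identity $\tfrac{1}{2n_k}\sum_{i,j\in\mathcal C_k}\|Y_i-Y_j\|^2=\sum_{i\in\mathcal C_k}\|Y_i-\bar Y_k\|^2$, with $\bar Y_k$ the block mean, which turns $\langle\pi,C\rangle$ into the within-cluster scatter $\sum_{i,k}Z_{ik}\|Y_i-\bar Y_k\|^2$ up to the factor $n_k/(n_k-1)\to 1$; identifying $\bar Y_k$ with the profile maximiser $\mu_k$ of \eqref{eq: LogLik} recovers the quadratic term of \eqref{eq: ApproxLogLik}. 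The high-dimensional structure enters twice. The constant offset $\tfrac12(\expec\|\eta_i\|^2+\expec\|\eta_j\|^2)=\sigma^2 d$ carried by every $C_{ij}$ is, on $B_n^\circ$, a rank-one perturbation of $C$ absorbed by the dual potentials, exactly the robustness mechanism sketched in Section~\ref{sec: Reg}; and the discrepancy $\|Y_i-\bar Y_k\|^2-\|Y_i-\mu_k\|^2=-2\langle\eta_i,\bar\eta_k\rangle+\|\bar\eta_k\|^2$, with $\bar\eta_k=\bar Y_k-\mu_k$ of norm $O_p(\sigma\sqrt{d/n})$, produces exactly the per-observation fluctuation $O_p(\sqrt d/\sqrt n)$.

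For the regularisation term I would compute $\Phi(\pi)\approx-\sum_k n_k\log n_k=nH(\hat\theta)-n\log n$ with $\hat\theta_k=n_k/n$, and then split $H(\hat\theta)=-\sum_k\hat\theta_k\log\theta_k-\KL(\hat\theta\,\|\,\theta)$ to expose a term proportional to $\sum_{i,k}Z_{ik}\log\theta_k$, which is the mixing-weight term of \eqref{eq: ApproxLogLik}; the residual $\KL(\hat\theta\,\|\,\theta)$ term, which vanishes as $\hat\theta\to\theta$, and the $n\log n$ piece, together with the offsets above, are absorbed into the constant $-nd(\log\sigma-\sigma^2)$. The simplest way to see which $\eps$ makes the two selection rules agree is to reassign a single observation from cluster $k$ to cluster $k'$: the transport cost then rises by $\|\mu_k-\mu_{k'}\|^2$ while the regulariser changes by $\eps\log(\theta_k/\theta_{k'})$, and the same move changes \eqref{eq: ApproxLogLik} by $\|\mu_k-\mu_{k'}\|^2-\log(\theta_{k'}/\theta_k)$; equating these trade-offs shows that the two problems select the same clustering, and pinning the multiplicative constant in $\eps$, as opposed to only its order, then comes down to matching the overall scale set by the within-cluster cost $\sigma^2 d$ and the typical block size $n/K$, which is where the normalisation $\eps=\tfrac{n\sigma^2}{K}H(\theta)$ originates.

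I expect the genuinely hard step to be the first one: showing that the \emph{true} minimiser of \eqref{Eq: RegOT}, and not merely the best block-diagonal plan, is asymptotically of this uniform clustered form. This needs a separation hypothesis $\min_{k\neq k'}\|\mu_k-\mu_{k'}\|^2\gg\eps$ together with concentration of $\|Y_i-Y_j\|^2$ that is uniform in $(i,j)$, so that both the mass leaking across blocks and the departure of the within-block weights from uniformity are $o_p$ at the scale of the objective. A second, more mechanical, difficulty is to track the $d$- and $K$-dependent constants sharply enough to land on the stated normalisation of $\eps$ rather than only its correct order of magnitude.
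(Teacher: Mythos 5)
Your cost-side analysis is sound and essentially reproduces the paper's Proposition~\ref{prop: error} by a different route: the paper expands $\|Y_i-\mu_k\|^2$ against each $\|Y_i-Y_j\|^2$ and controls the cross term and the $\chi^2$ term conditionally on $(Y_i,Z_1,\dots,Z_n)$, whereas you pass through the empirical mean $\bar Y_k$ via the scatter identity; both yield the $d\sigma^2$ offset and the $O_p(\sqrt{d}/\sqrt{n})$ per-observation fluctuation, and your observation that the offset is a rank-one perturbation absorbed on $B_n^\circ$ is the same mechanism the paper invokes. Likewise, the identification of hard assignments $Z$ with hollow bistochastic plans $\pi_{ij}=\sum_k\1_{i\neq j}Z_{ik}Z_{jk}(N_k-1)^{-1}$ is exactly the paper's Proposition~\ref{prop: MatPi}. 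One calibration: the paper never proves that the global minimiser of \eqref{Eq: RegOT} is of this clustered form and assumes no separation; its ``asymptotic equivalence'' is established only by matching the two objectives on the class of plans induced by hard assignments. So the step you flag as the genuinely hard one is real, but it is not part of the paper's argument, and you need not supply it to match the paper's level of rigour.

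The genuine gap is in your treatment of the regulariser. The theorem, as proved (see the remark following it and the section on entropy-regularised self-transport), concerns the \emph{quadratic} regulariser; treating the entropic case and declaring the quadratic one ``analogous'' inverts the paper's logic and, more importantly, cannot produce the stated $\eps$. In the paper, on clustered plans one has $\|\pi\|_F^2=\sum_k N_k(N_k-1)^{-1}=K+\Oh_p(n^{-1})$, a \emph{constant}, while the likelihood contributes the constant $\sum_k N_k\log\theta_k = -nH(\theta)+\Oh_p(\sqrt{n})$; the choice $\eps=n\sigma^2H(\theta)/K$ is precisely what equates these two additive constants, so $H(\theta)$ enters from the likelihood side, not from the regulariser. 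Your entropic computation instead yields a genuinely $Z$-dependent term $\eps\Phi(\pi)\approx -\eps n\log n-\eps\sum_{i,k}Z_{ik}\log\theta_k-\eps n\KL(\hat\theta\,\|\,\theta)$; matching the coefficient of $\sum_{i,k}Z_{ik}\log\theta_k$ against the likelihood forces $\eps\asymp\sigma^2$ rather than $n\sigma^2H(\theta)/K$, and leaves behind a spurious $\KL(\hat\theta\,\|\,\theta)$ penalty absent from \eqref{eq: ApproxLogLik}. Your own single-reassignment check exposes this: equating $\eps\log(\theta_k/\theta_{k'})$ with $\log(\theta_{k'}/\theta_k)$ pins $\eps$ at the scale $\sigma^2$ and no amount of rescaling by $d$ or $n/K$ recovers the factor $H(\theta)$. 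This is consistent with the paper's explicit observation that in the entropic case no clean choice of $\eps$ emerges and the (non-sparse) entropic plan cannot be close to the clustered $\pi$. To repair the proposal, replace the entropic computation by the Frobenius-norm computation on clustered plans and match constants as above.
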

The proof is deferred to Section~\ref{sec: Main}.
\begin{rmk}
As announced in the title, this theorem means that quadratically regularised (hollow) self-transport approximates maximum likelihood estimation in a Gaussian mixture model for a proper choice of the regularisation parameter $\eps$. 
\end{rmk}

Naturally, the vector $\theta$ and $K$ are unknown, but the order of magnitude of the entropy might be roughly known. For the parameter $K$, it is important to note that
 the optimal solution $\pi^*$ of  \eqref{Eq: RegOT} is such that 
\[
\pi_{ij}^* \approx \sum_{k=1}^K \1_{i\neq j}Z_{ik}Z_{jk}(N_k-1)^{-1},
\] 
where the approximate equality sign is a consequence of the error terms in \eqref{eq: ApproxLogLik}. 
The number of clusters will thus reflect in the rank of  $\pi^*$. 
Altogether,  a clustering method applied to $\pi^*$ is a thus natural way to recover the different components of the mixture. 
From these clusters, one can then estimate $K$, $\theta$ and $\sigma$ and verify that these values are in line with the optimal parameter $\epsilon$.

\begin{rmk}[Unknown $\sigma^2$ and $\theta$]
In practice, one might want to estimate $\sigma^2$ and $\theta$ as well. 
One could thus add the term $- nd(\log \sigma- \sigma^2)$ to the optimal transport problem with the choice of the regularisation parameter as above. 
One could then perform a grid search to find (approximately) optimal $\sigma^2$ and $\eps/\sigma^2$. 
\end{rmk}

We leave the practical questions and numerical experiments pertaining to this approach for future works.

\subsection{Robustness to heteroskedasticity}

The results of Section~\ref{sec: IsoGausMix} holds under the assumption that the Gaussians at play are isotropic with variance parameter $\sigma^2$.
As already hinted at in Section~\ref{sec: Reg}, quadratically regularised hollow self-transport behaves well under heteroskedastic noise. 
This enables to extend the usefulness of the approach to cases of the different $\Sigma_k$'s, provided that there exists $\sigma^2>0$ and a decomposition 
\begin{equation}
\label{eq: DecompSigma}
\Sigma_k = \sigma^2 I_d + \tilde \Sigma_k,  \qquad k \in \{ 1, \ldots, K\},
\end{equation}
with each $\tilde \Sigma_k$ being positive semi-definite.
This condition is violated if at least one $\Sigma_k$ is merely positive \emph{semi}-definite, it is thus not very restrictive.
Under this decomposition, we can see the observations $(Y_i)_{i=1}^n$ as given by 
\[
Y_i = \tilde Y_i + \eta_i,
\]
where the $Y_i$'s are observations of an isotropic GMM and the noise vectors $\eta_i$ are such that 
\[
\eta_i \vert Z_{i,k}   \sim \normal \left(0,  \sum_{k=1}^K  Z_{i,k} \Sigma_k \right).
\] 
We then define $C$ and $\tilde C$ to be the pairwise squared Euclidean matrices of the $Y_i$'s and the  $\tilde Y_i$'s, respectively.
With this notation, we can state the following theorem. 

\begin{thm}
\label{thm: Rob}
Take the heteroskedastic GMM as introduced above.
Assume that there exist a constant $C_\mu$ such that 
$
\max_{1\le k,k' \le K} \| \mu_k -\mu_{k'} \|\le C_\mu.
$
Further assume that $\| \tilde \Sigma_k\|_2 \le C_\Sigma, \forall k \in \{1, \ldots, K \}$. Then, 
\[
\big\|\pi_{\eps,Q}^*\left( C \right) - \pi_{\eps,Q}^*\left(C\right) \big\|_F \le  \frac{1}\eps \big\| E\big\|_F,
\]
where $E$ is a centred error matrix for which each entry is  $\Oh_p\left(d^{1/2}\right)$.
\end{thm}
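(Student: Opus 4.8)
The plan is to build on the metric-projection characterisation of $\pi_{\eps,Q}^*$ recalled at the end of Section~\ref{sec: Reg}: since $\pi_{\eps,Q}^*(C)$ is the Euclidean projection of $-C/\eps$ onto the closed convex set $B_n^\circ$, and projections onto convex sets are non-expansive, the factor $1/\eps$ and the Frobenius norm will appear automatically once the relevant perturbation of the cost is identified. The substantive work is to show that the $\Oh_p(d)$-sized part of $C-\tilde C$ is invisible to the projection, leaving only a centred, $\Oh_p(\sqrt d)$ remainder $E$. Two invariances specific to the \emph{hollow} set $B_n^\circ$ make this possible. First, because every $\pi \in B_n^\circ$ has vanishing diagonal, the diagonal of the cost matrix never enters $\langle C,\pi\rangle$, so it may be altered freely without changing the minimiser. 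Second, for any $u,v\in\reals^n$ and $\pi\in B_n^\circ$ one has $\langle u 1_n^\top + 1_n v^\top,\pi\rangle = \sum_i u_i + \sum_j v_j$, a constant, because the off-diagonal row and column sums of $\pi$ all equal one; hence adding $u 1_n^\top + 1_n v^\top$ to $C$ leaves $\pi_{\eps,Q}^*(C)$ unchanged. These are precisely the terms ``absorbed by the optimal potentials'' mentioned in Section~\ref{sec: Reg}.

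Writing $Y_i - Y_j = (\tilde Y_i - \tilde Y_j) + (\eta_i - \eta_j)$ and expanding, I obtain for $i\neq j$
\[
C_{ij} - \tilde C_{ij} = \tfrac12\|\eta_i\|^2 + \tfrac12\|\eta_j\|^2 + \big( \langle \tilde Y_i - \tilde Y_j, \eta_i - \eta_j\rangle - \langle \eta_i, \eta_j\rangle\big).
\]
Setting $a_i := \tfrac12\|\eta_i\|^2$, the first two terms form the off-diagonal part of the rank-two matrix $a 1_n^\top + 1_n a^\top$, which by the second invariance the projection ignores. Defining $E$ by $E_{ij} := \langle \tilde Y_i - \tilde Y_j, \eta_i - \eta_j\rangle - \langle \eta_i, \eta_j\rangle$ for $i\neq j$ and $E_{ii}:=0$, the two invariances together give $\pi_{\eps,Q}^*(C) = \pi_{\eps,Q}^*(\tilde C + E)$, since $C$ and $\tilde C + a1_n^\top + 1_n a^\top + E$ differ only on the diagonal. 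Non-expansiveness of the projection then yields
\[
\big\| \pi_{\eps,Q}^*(C) - \pi_{\eps,Q}^*(\tilde C)\big\|_F \le \tfrac1\eps \big\| (\tilde C + E) - \tilde C\big\|_F = \tfrac1\eps \|E\|_F.
\]

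It then remains to control $E$. Let $k_i$ denote the component from which observation $i$ is drawn, so that $\eta_i$ is conditionally centred with covariance $\tilde\Sigma_{k_i}$. Centredness of $E$ is immediate: $\eta_i$ has mean zero and is independent of the $\tilde Y$'s, so $\expec\langle \tilde Y_i - \tilde Y_j, \eta_i - \eta_j\rangle = 0$, while for $i\neq j$ independence of $\eta_i,\eta_j$ gives $\expec\langle \eta_i, \eta_j\rangle = 0$. For the magnitude, conditioning on the $\tilde Y$'s the cross term is Gaussian with conditional variance $(\tilde Y_i - \tilde Y_j)^\top(\tilde\Sigma_{k_i}+\tilde\Sigma_{k_j})(\tilde Y_i - \tilde Y_j) \le 2 C_\Sigma\,\|\tilde Y_i - \tilde Y_j\|^2 = \Oh_p(d)$, where $\|\tilde Y_i - \tilde Y_j\|^2 = \Oh_p(d)$ uses $\|\mu_k - \mu_{k'}\|\le C_\mu$ together with the $\sigma^2 d$ concentration of the isotropic part; and a direct second-moment computation gives $\Var\langle \eta_i,\eta_j\rangle = \tr(\tilde\Sigma_{k_i}\tilde\Sigma_{k_j}) \le C_\Sigma\,\tr(\tilde\Sigma_{k_j}) \le d\,C_\Sigma^2 = \Oh(d)$. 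Hence each entry of $E$ is $\Oh_p(\sqrt d)$, as claimed.

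The main obstacle is the structural step in the first two paragraphs: recognising that the dominant, $\Oh_p(d)$-sized contributions $\tfrac12\|\eta_i\|^2+\tfrac12\|\eta_j\|^2$ assemble into a rank-two row/column pattern annihilated by the hollow constraint. This is exactly where the zero-diagonal requirement is essential---without it these terms would survive and the bound would degrade from order $\sqrt d$ to order $d$. By contrast, the last paragraph consists of routine Gaussian moment estimates under the stated boundedness assumptions on $C_\mu$ and $C_\Sigma$.
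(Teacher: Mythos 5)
Your proof is correct and follows essentially the same route as the paper: decompose $C-\tilde C$ into a $u1_n^\top+1_nv^\top$ part absorbed by the potentials, a diagonal part killed by the hollowness constraint, and a centred $\Oh_p(\sqrt d)$ remainder, then apply non-expansiveness of the metric projection onto $B_n^\circ$ (this is the content of Proposition~\ref{prop: Robust} combined with the Lipschitz remark of Section~\ref{sec: Reg}). If anything, you are more explicit than the paper, correctly identifying the $\|\eta_i\|^2+\|\eta_j\|^2$ pattern as rank two rather than rank one and spelling out why both invariances hold on $B_n^\circ$.
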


\begin{rmk}
Remark that the order of the entries of the cost matrix is $\Oh_p(d)$, so that the errors indeed have a limited impact. We further stress that the right-hand side in the inequality above is an upper bound; it is difficult to assess whether it is sharp.
A closer inspection of the proof of Proposition~\ref{prop: Robust} shows that more assumptions on the eigendecomposition of the $\tilde \Sigma_k$'s coupled with conditions on the direction of the vectors 
$ \mu_k - \mu_{k'}$, $1\le k,k'\le K$, could guarantee better bounds.
\end{rmk}
\begin{proof}[Proof of Theorem~\ref{thm: Rob}]
Proposition~\ref{prop: Robust} below states that $C - \tilde C$ is the sum of a rank-one matrix, a diagonal matrix and some stochastic matrix that is $\Oh_p\left(d^{1/2}\right)$.
Remark that a rank one-one perturbation and a diagonal perturbation of the cost matrix leaves the optimiser of \eqref{Eq: RegOT} invariant.
Thus, the difference between the solutions for $C$ and $\tilde C$ only depends on the $\Oh_p\left(d^{1/2}\right)$ matrix. 
The Lipschitzianity of the metric projection recalled in Section~\ref{sec: Reg} yields the claim.
\end{proof}


\section{Proofs}
\label{sec: Main}

\subsection{Proof of Theorem 2}
An estimator of \eqref{eq: LogLik} that removes the problem of estimating the $\mu_k$'s \textemdash even though still intractable as the $Z_i$'s are unknown \textemdash is 

\begin{align}
\label{eq: AltLogLik}
\nonumber
&\tilde \ell_n(Z,\theta):=\\
&\quad \sum_{i=1}^n \sum_{k=1}^K Z_{ik} \ln \theta_k  - \sum_{i=1}^n\sum_{j=1}^n \sum_{k=1}^K \frac{Z_{ik}Z_{jk}}{N_k-1 } \1_{i\neq j}\frac1{2\sigma^2} \| Y_i - Y_j \|^2 - nd(\log \sigma- \sigma^2),
\end{align}
where $N_k$ is the unobserved number of elements in the $k$th cluster.
We first establish that this estimator is actually fulfilling its purpose.

\begin{prop}
\label{prop: error}It holds that
\[
\|Y_i -\mu_ k \|^2 = \sum_{j=1}^n \frac{Z_{jk}}{N_k-1 } \1_{i\neq j}\frac1{2\sigma^2} \| Y_i - Y_j \|^2 - d \sigma^2  + O_p\left(\frac{ \sqrt{d}}{\sqrt{n}}\right).
\]
\end{prop}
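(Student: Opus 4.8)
The plan is to condition on the (unobserved) cluster labels $Z$ and exploit the fact that, within a fixed cluster, the common mean cancels in pairwise differences. On the event $Z_{ik}=1$ write $Y_i=\mu_k+\sigma\xi_i$ with $\xi_i\sim\normal(0,I_d)$ i.i.d.\ across $i$. For any other index $j$ with $Z_{jk}=1$ one then has $Y_i-Y_j=\sigma(\xi_i-\xi_j)$, so the means drop out and $\|Y_i-Y_j\|^2=\sigma^2\|\xi_i-\xi_j\|^2$. This is the observation that makes the within-cluster average of pairwise squared distances informative about $\|Y_i-\mu_k\|^2=\sigma^2\|\xi_i\|^2$, despite the $\mu_k$'s being unknown.

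Next I would expand the averaged within-cluster distance. Summing over the $N_k-1$ indices $j\ne i$ with $Z_{jk}=1$ and using $\|\xi_i-\xi_j\|^2=\|\xi_i\|^2+\|\xi_j\|^2-2\xi_i^\top\xi_j$ gives
\[
\frac{1}{N_k-1}\sum_{j}Z_{jk}\1_{i\ne j}\|\xi_i-\xi_j\|^2
=\|\xi_i\|^2+\frac{1}{N_k-1}\sum_{j}Z_{jk}\1_{i\ne j}\|\xi_j\|^2-\frac{2}{N_k-1}\xi_i^\top\!\!\sum_{j}Z_{jk}\1_{i\ne j}\xi_j .
\]
The first term reproduces $\|Y_i-\mu_k\|^2/\sigma^2$, the leading quantity. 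The second is an average of $N_k-1$ i.i.d.\ $\chi^2_d$ variables, hence equal to $d$ up to fluctuations, which after reinstating $\sigma^2$ yields the deterministic bias $\sigma^2 d$. The third term is centred and is the source of the stochastic remainder. Rearranging then expresses $\|Y_i-\mu_k\|^2$ as the average of within-cluster squared distances minus $\sigma^2 d$, plus the two error contributions.

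It remains to control both remainders. For the $\chi^2$ average, $\Var(\|\xi_j\|^2)=2d$ and independence give a fluctuation of order $\sqrt{d/(N_k-1)}$. For the cross term, conditionally on $\xi_i$ the sum $\sum_j Z_{jk}\1_{i\ne j}\xi_j$ is $\normal(0,(N_k-1)I_d)$ and independent of $\xi_i$, so $\frac{2}{N_k-1}\xi_i^\top(\cdots)$ has conditional variance $4\|\xi_i\|^2/(N_k-1)\approx 4d/(N_k-1)$, again of order $\sqrt{d/N_k}$. Since $\theta$ and $K$ are fixed, $N_k\sim\mathrm{Bin}(n,\theta_k)$ concentrates and $N_k=\Theta_p(n)$, so both remainders are $O_p(\sqrt{d/n})$. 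Because the conditional bounds depend on $Z$ only through the sizes $N_k$, they transfer to unconditional $O_p$ statements. The main obstacle is precisely this last point: one must make the control uniform enough over label configurations that it survives both the removal of the conditioning on $Z$ and the aggregation over all $n$ indices, in particular ruling out pathologically small clusters that would inflate the $\sqrt{d/N_k}$ rate.
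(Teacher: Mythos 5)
Your proof is correct and follows essentially the same route as the paper's: the expansion $\|\xi_i-\xi_j\|^2=\|\xi_i\|^2+\|\xi_j\|^2-2\xi_i^\top\xi_j$ is exactly the paper's ``expansion of squares'' rewritten in standardised variables, and you isolate the same two remainders (a conditionally Gaussian cross term and an averaged $\chi^2$ term), each $O_p(\sqrt{d/n})$ after conditioning on $Z$ and using $N_k=\Theta_p(n)$. Note only that, like the paper's own argument, your computation establishes the identity \emph{without} the factor $\tfrac{1}{2\sigma^2}$ inside the sum, which appears to be a typo in the displayed statement carried over from \eqref{eq: AltLogLik}.
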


For comparison of the errors magnitude, notice that $\|Y_i -\mu_ k \|^2 = O_p\left( d \right)$.

\begin{proof}
An expansion of squares yields
\[
\| Y_i - \mu_k \| ^2 =\| Y_i - Y_j \|^2  -2 \langle  Y_i - \mu_k,  \mu_k - Y_j \rangle  -  \|  \mu_k - Y_j \|^2, 
\]
Note that, for $j=i$, the error is  $\|  \mu_k - Y_i \|^2$, which has a different sign from that of  $-  \|  \mu_k - Y_j \|^2$ for the other cases.
This explains the presence of the indicator variable. 
Then, basic computations yield
\[
 \sum_{\substack{j=1\\ j \neq 1}}^n \frac{Z_{jk}}{{N_k-1 }} \langle  Y_i - \mu_k,  \mu_k - Y_j \rangle \Big\vert (Y_i , Z_1, \ldots, Z_n) \sim \frac{1}{\sqrt{N_{k}-1 }} \normal\Big(0, \sigma^2 \sum_{\ell=1}^d(Y_{i,\ell}- \mu_{k,\ell})^2\Big )
\]
as well as 
\[
 \sum_{\substack{j=1\\ j\neq i}}^n \frac{Z_{jk}}{{N_k-1 }} \|  \mu_k - Y_j \|^2 \Big\vert ( Z_1, \ldots, Z_n) \sim \frac{\sigma^2}{N_k-1} \chi_{d\times (N_k-1)}^2. 
\]
The claim follows from a straightforward use of the laws of total expectation and variance.
\end{proof}

Proposition~1 formalises the intuition that maximising the alternative \eqref{eq: AltLogLik} instead of \eqref{eq: LogLik} come at little cost but removes the problem of estimating the $\mu_k$'s. 
To simplify notation, let us now set  $\pi_{ij} :=\sum_{k=1}^K \1_{i\neq j}Z_{ik}Z_{jk}(N_k-1)^{-1}$. 

\begin{prop}
\label{prop: MatPi}
The matrix $\pi$ is a symmetric bistochastic matrix with all diagonal elements equal to zero.
\end{prop}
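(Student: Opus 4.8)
The plan is to read off all three asserted properties directly from the definition $\pi_{ij} = \sum_{k=1}^K \1_{i\neq j}\,Z_{ik}Z_{jk}/(N_k-1)$, using only the two structural facts about the membership matrix $Z$: each observation lies in exactly one cluster, so $\sum_{k=1}^K Z_{ik}=1$ with $Z_{ik}\in\{0,1\}$, and the column sums recover the cluster sizes, $\sum_{i=1}^n Z_{ik}=N_k$.

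First I would dispose of symmetry and the zero diagonal, which are immediate. Exchanging $i$ and $j$ leaves every summand $\1_{i\neq j}Z_{ik}Z_{jk}/(N_k-1)$ invariant, so $\pi_{ij}=\pi_{ji}$; and the factor $\1_{i\neq j}$ forces $\pi_{ii}=0$ for each $i$. Nonnegativity of every entry is equally clear, since each term of the defining sum is a ratio of nonnegative quantities (granting $N_k\ge 2$, see below).

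The substance of the argument is the row sum, which I would compute by fixing $i$ and writing $\sum_{j=1}^n \pi_{ij} = \sum_{k=1}^K \frac{Z_{ik}}{N_k-1}\sum_{j\neq i}Z_{jk}$. The inner sum counts the members of cluster $k$ other than $i$, so $\sum_{j\neq i}Z_{jk}=N_k-Z_{ik}$, and since $Z_{ik}\in\{0,1\}$ we have $Z_{ik}(N_k-Z_{ik})=Z_{ik}(N_k-1)$. The factor $N_k-1$ therefore cancels the denominator, leaving $\sum_{j}\pi_{ij}=\sum_{k=1}^K Z_{ik}=1$. By the symmetry already established, the column sums coincide with the row sums and equal $1$ as well; together with nonnegativity this places $\pi$ in $B_n^\circ$, as claimed.

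The only delicate point is really a well-definedness caveat rather than a genuine obstacle: the denominators $N_k-1$ require that every occupied cluster contain at least two points, for a singleton cluster produces the indeterminate expression $0/0$ and its row cannot sum to one. I would therefore record the standing assumption $N_k\ge 2$ for all $k$, which holds with high probability in the regime of interest, where $n$ is large and the proportions $\theta_k$ are bounded away from zero.
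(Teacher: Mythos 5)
Your proof is correct and takes essentially the same route as the paper's: the row-sum computation via $\sum_{j\neq i}Z_{jk}=N_k-Z_{ik}$ and the cancellation of $N_k-1$ is exactly the paper's (more compressed) calculation, with symmetry and hollowness read off the definition in the same way. Your explicit well-definedness caveat that every occupied cluster must satisfy $N_k\ge 2$ is a point the paper leaves implicit, and is worth recording.
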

\begin{proof}
One easy sees that $\pi$  is a (sparse) matrix with positive entries.
The indicator in the definition ensures that the matrix is hollow. 
 Further, for each $ i \in\{1, \ldots,n\}$, 
\[
   \sum_{j=1}^n  \sum_{k=1}^K \1_{i\neq j}Z_{ik}Z_{jk}(N_k-1)^{-1} =    \sum_{k=1}^K (N_k-1)^{-1} Z_{ik}\sum_{j=1}^n  \1_{i\neq j} Z_{jk} = 1.\qedhere
\]
\end{proof}

With the notation set before, Equation~\eqref{eq: AltLogLik} can thus be rewritten as
\begin{align*}
 & \sum_{k=1}^K \sum_{i=1}^n Z_{ik} \ln \theta_k - \frac{1}{\sigma^2} \langle \pi , C \rangle - nd(\log \sigma- \sigma^2)\\
&\qquad \qquad =   \sum_{k=1}^K  N_k  \ln \theta_k  - \frac{1}{\sigma^2} \langle \pi, C \rangle - nd(\log \sigma- \sigma^2).
\end{align*} 
In view of Equation~\eqref{Eq: RegOT}, thus remains to relate $\sum_{k=1}^K  N_k  \ln \theta_k$ with the regularisers of the OT problem. 
First, in the simple case where all clusters have the same probability, i.e., if  $\theta_1 = \cdots = \theta_K$, 
\[
\sum_{k=1}^K N_k \log \theta_k = - n \log K.
\]
In general, as $N_k = n \theta_k \left( 1+ \Oh_p\big( \sqrt{(1-\theta_k)/(n\theta_k)}\big)\right)$
\[
\sum_k N_k \log \theta_k = n   \sum_k \theta_k \log \theta_k + \Oh_p(\sqrt{n}).
\]
Remark that
\begin{align*}
 \|\pi\|^2_F&=\sum_{i,j} \left( \sum_k  Z_{ik}Z_{jk} \1_{i\neq j} (N_k -1)^{-1} \right)^2 \\
 &= \sum_{i,j}\sum_k  Z_{ik}Z_{jk} \1_{i\neq j} (N_k -1)^{-2} =  \sum_{i}\sum_k  Z_{ik}(N_k -1)^{-1} \\
 &=\sum_k N_k (N_k -1)^{-1} = K + \Oh_p(\sum_k n^{-1} \theta_k^{-1})
\end{align*}
Therefore, while maximising the approximate surrogate likelihood is the maximisation of
\[
\pi \mapsto n   \sum_k \theta_k \log \theta_k + \oh_p(n) - \frac{1}{\sigma^2} \langle \pi, C \rangle - nd(\log \sigma- \sigma^2), 
\]
the quadratically regularised optimal transport involves minimisation of 
\[
\pi \mapsto \langle \pi, C \rangle + \eps_Q \|\pi\|^2_F 
\]
over the set of hollow bistochastic matrices.
In view of the developments above, one can can rewrite the first minimisation problem, up to asymptotically negligible factors, as
\[
\max_{\pi\in B^\circ_n}  \frac{  -n H(\theta)  \| \pi\|^2_F }{K } -  \frac{1}{\sigma^2} \langle \pi, C \rangle - nd(\log \sigma- \sigma^2), 
\]
Hence, the above computations suggest the choice
\[
\eps_Q = \frac{n\sigma^2}{K} H(\theta)
\]
to make both problems nearly equivalent as $\sigma^2, \theta$ and $K$ are assumedly known and fixed. The claim of Theorem~\ref{thm: main} thus follows.

\subsection{Robustness}
It remains to provide one final proposition.
\begin{prop}
\label{prop: Robust}
Assume that $Y_i$ are as above, i.e., independent and normally distributed with covariance matrix $\sigma^2 I_d$. Only the mean vector depends on the cluster. 
Assume that there exist a constant $C_\mu$ such that 
\[
\max_{1\le k,k' \le K} \| \mu_k -\mu_{k'} \|\le C_\mu.
\]
Now consider corrupted observations
\[
\tilde Y_i = Y_i + \eta_i ,
\]
with $\eta_i \sim \mathcal{N}(0,\Sigma_i)$. The $\Sigma_i's$ can be different as long as  $\|\Sigma_i \|_2 \le C_\eta, \forall i$. 
Then, 
\[
\| \tilde Y_i - \tilde Y_j  \|^2= \|  Y_i -  Y_j  \|^2  +   \|\eta_i \|^2 +    \|\eta_j \|^2 + \Oh_p\left(\sqrt{d}\right).
\]
\end{prop}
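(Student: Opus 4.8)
The plan is to expand the squared norm and reduce the statement to controlling the two cross terms that appear. Writing $\tilde Y_i - \tilde Y_j = (Y_i - Y_j) + (\eta_i - \eta_j)$ and expanding gives
\[
\|\tilde Y_i - \tilde Y_j\|^2 = \|Y_i - Y_j\|^2 + \|\eta_i\|^2 + \|\eta_j\|^2 + 2\langle Y_i - Y_j,\, \eta_i - \eta_j\rangle - 2\langle \eta_i,\, \eta_j\rangle.
\]
Since the first three summands already appear in the claimed identity, it remains to show that $\langle \eta_i, \eta_j\rangle = \Oh_p(\sqrt{d})$ and $\langle Y_i - Y_j, \eta_i - \eta_j\rangle = \Oh_p(\sqrt{d})$, after which the two error contributions can be merged into a single $\Oh_p(\sqrt{d})$ term.

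For the noise--noise term I would use that $\eta_i$ and $\eta_j$ are independent for $i\neq j$, so that $\expec \langle \eta_i, \eta_j\rangle = 0$ and, by a direct second-moment computation that factorises over the two independent vectors,
\[
\Var \langle \eta_i, \eta_j\rangle = \sum_{\ell, m} (\Sigma_i)_{\ell m}(\Sigma_j)_{\ell m} = \tr(\Sigma_i \Sigma_j) \le \|\Sigma_i\|_2\, \tr(\Sigma_j) \le d\, C_\eta^2,
\]
where the last two inequalities use positive semi-definiteness together with the assumption $\|\Sigma_i\|_2 \le C_\eta$ (note $\tr(\Sigma_j) \le d\|\Sigma_j\|_2$). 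Chebyshev's inequality then delivers $\langle \eta_i, \eta_j\rangle = \Oh_p(\sqrt{d})$.

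For the signal--noise term I would condition on the clean sample $(Y_i)_{i=1}^n$, which is independent of the noise. Conditionally on $Y$, the quantity $\langle Y_i - Y_j, \eta_i - \eta_j\rangle$ is a centred linear functional of the Gaussian $\eta_i - \eta_j \sim \normal(0, \Sigma_i + \Sigma_j)$, with conditional variance $(Y_i - Y_j)^\top(\Sigma_i + \Sigma_j)(Y_i - Y_j)$. As the conditional mean vanishes, the law of total variance yields
\[
\Var \langle Y_i - Y_j, \eta_i - \eta_j\rangle = \expec\big[(Y_i - Y_j)^\top(\Sigma_i + \Sigma_j)(Y_i - Y_j)\big] = (\mu_i - \mu_j)^\top(\Sigma_i + \Sigma_j)(\mu_i - \mu_j) + 2\sigma^2\, \tr(\Sigma_i + \Sigma_j),
\]
where $\mu_i$ denotes the mean of the cluster containing observation $i$. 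The first summand is $\Oh(1)$ because $\|\Sigma_i + \Sigma_j\|_2 \le 2C_\eta$ and $\|\mu_i - \mu_j\|\le C_\mu$, whereas the second is $\Oh(d)$ because $\tr(\Sigma)\le d\, C_\eta$; the total variance is thus $\Oh(d)$, and centredness again gives $\langle Y_i - Y_j, \eta_i - \eta_j\rangle = \Oh_p(\sqrt{d})$.

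Combining the two bounds with the expansion proves the statement. There is no deep obstacle; the one step that requires genuine care is the signal--noise cross term, where I must exploit the independence of signal and noise to annihilate the conditional mean and then lean on the boundedness hypotheses $\|\mu_k - \mu_{k'}\|\le C_\mu$ and $\|\Sigma_i\|_2 \le C_\eta$ to keep the variance at the $\Oh(d)$ scale. It is precisely these two assumptions that prevent the cross term from degrading to a larger order and thereby guarantee that the perturbation stays small relative to the $\Oh_p(d)$ magnitude of the entries of the cost matrix.
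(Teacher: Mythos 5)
Your proposal is correct and follows essentially the same route as the paper: expand the square, note that both cross terms are centred, bound $\expec[\langle\eta_i,\eta_j\rangle^2]=\tr(\Sigma_i\Sigma_j)\le C_\eta^2 d$ and $\Var\langle Y_i-Y_j,\eta_i-\eta_j\rangle=\expec[(Y_i-Y_j)^\top(\Sigma_i+\Sigma_j)(Y_i-Y_j)]=\Oh(d)$ using the assumptions on $\|\mu_k-\mu_{k'}\|$ and $\|\Sigma_i\|_2$, and conclude by Chebyshev. The only cosmetic difference is that you bound $\tr(\Sigma_i\Sigma_j)$ via $\tr(AB)\le\|A\|_2\tr(B)$ where the paper invokes von Neumann's trace inequality; both yield the same $C_\eta^2 d$ bound.
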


\begin{proof}
It holds that for $i \neq j$, 
\[
\| \tilde Y_i - \tilde Y_j  \|^2= \|  Y_i -  Y_j  \|^2 + 2 \langle Y_i -  Y_j, \eta_i - \eta_j \rangle - 2\langle \eta_i, \eta_j \rangle +   \|\eta_i \|^2 +    \|\eta_j \|^2.
\]
Observe that 
\[
\expec \left[ \langle Y_i -  Y_j, \eta_i - \eta_j \rangle\right]=0 \quad \text{and} \quad \expec \left[\langle \eta_i, \eta_j \rangle\right] =0.
\]
Further, by von Neumann's trace inequality,
\[
 \expec \left[\langle \eta_i, \eta_j \rangle^2\right] = \tr [\Sigma_i\Sigma_j ] \le \sum_{k=1}^d  \lambda_k(\Sigma_i)\lambda_k(\Sigma_j ) \le C_\eta^2 d.
\]
Now, conclude by using 
\begin{align*}
\expec \left[ \langle Y_i -  Y_j, \eta_i - \eta_j \rangle^2\right] &= \expec \left[  (Y_i -  Y_j)^\top ( \eta_i - \eta_j)( \eta_i - \eta_j)^\top(Y_i -  Y_j)  \right]\\
&= \expec \left[  (Y_i -  Y_j)^\top ( \Sigma_i + \Sigma_j)(Y_i -  Y_j)  \right]\\
&\le \| \dot\mu \|^2 ( \|\Sigma_i\|_2 + \|\Sigma_j\|_2) + 2\sigma^2 (\tr \Sigma_i+ \tr \Sigma_j). \qedhere
\end{align*}
\end{proof}

In words, the previous proposition states that the cost matrix of the perturbed points\textemdash giving rise to anisotropic Gaussian mixtures, is equal to that of the isotropic case  
up to a controlled error and a rank-one matrix.

\section{Further remarks and conclusion}
\label{sec: FurRem}

\subsection{Entropy regularised self-transport}
It is natural to ask whether the entropic regularised optimal transport problem is equally good for the same problem.
Remark that $Z_{i,k} Z_{j,k}$ is a product of independent Bernoulli distributions for $j\neq i$ 
and thus 
\[
Z_{i,k} Z_{j,k}  \theta_k^{-1} =  \theta_k + O_p \left(\sqrt{\theta^2_k (1-\theta_k^2) }\right) = \theta_k\left( 1 + O_p\left( \sqrt{ 1-\theta_k^2 } \right)\right) \qquad (i\neq j),
\]
where $\Oh_p$ is here meant for  $\theta_k $ small.
Therefore, as $ N_k-1= n\theta_k(1 + \oh_p(1))$,
\begin{align*}
H(\pi):&=  - \sum_{i,j} \sum_k \1_{i\neq j}Z_{ik}Z_{jk} (N_k-1)^{-1}  \log \left(1  + \Oh_p\left(\sum_k \sqrt{ 1-\theta_k^2 }\right) \right).
\end{align*}
This quantity is more difficult to parse and an optimal choice of the regularisation parameter does not seem obvious.  
Note however that, contrarily to the case of the quadratic regularisation, as the optimal coupling is not sparse, one cannot expect an entropy regularised transport plan to be close to the matrix $\pi$ defined above and depending on $Z$.

\subsection{Non-Gaussian mixtures}
It is also natural to wonder whether the approach works for non-Gaussian mixture models.
The answer is not obvious and given the good results of general pseudo-likelihood estimators in certain cases, one might hope for some stability in the case of slight model misspecification.
Another possibility would be to consider transformations of the distance matrix that mimic the log-likelihood of the mixture components. It seems unlikely that other models will give rise to the same nice properties as in the Gaussian case.
Investigating generalisations is left for future research.

\subsection{Conclusion}
We have shown a link between quadratic-regularised (hollow) self-transport (QOT) and maximum likelihood estimation in Gaussian mixture models. 
Also, the choice of hollow bistochastic matrices is important for both approximation and robustness reasons. 
This links shows that applying spectral clustering techniques on the obtained transport plan is a statistically principled approach. 
The method is also easy to apply as it only requires choosing one parameter.
This also suggests that QOT might be used to complement the current EM algorithms or help find a good initialisation for other methods. As the spectrum of $\pi^*$ contains relevant information, it might be used to complement other statistical procedures.

\printbibliography

\end{document}